\renewcommand{\le}{\leqslant}
\renewcommand{\ge}{\geqslant}
\newtheorem{theorem}{Theorem}
\newtheorem{lemma}{Lemma}
\newtheorem{propos}{Proposition}
\theoremstyle{definition}
\newtheorem{remark}{Remark}
\begin{document}

\title{On H\"{o}lder and Minkowski Type Inequalities}
\author{Petr Chunaev}
\thanks{The research of P. Chunaev was partially supported by MTM 2011-27637 grant.}
\author{Ljiljanka Kvesi\'{c}}
\author{Josip Pe\v{c}ari\'{c}}
\keywords{H\"{o}lder's inequality, Minkowski's inequality}
\subjclass{26D15}

\begin{abstract}
We obtain inequalities of H\"{o}lder and Minkowski type with weights generalizing both the case of weights with alternating signs and the classical case of non-negative weights.
\end{abstract}

\maketitle

\section{Introduction}

Recently Chunaev \cite{Chunaev} obtained H\"{o}lder and Minkowski type inequalities with alternating signs. His results are a supplement to Jensen type inequalities with alternating signs obtained earlier by Szeg\H{o} \cite{Szego},
Bellman \cite{Bellman1953, Bellman1959}, Brunk \cite{Brunk}, and others (see \cite{Weinberger,Steffensen,Olkin,Wright,Pecaric3,Pecaric2},\cite[\S5.38]{PecaricBook} and also Remark~\ref{rem1}).

In this paper, we intend to give inequalities of H\"{o}lder and Minkowski type with more general weights, including both the case of weights with alternating signs and the classical case of non-negative weights (see, for instance, \cite[\S 4.2]{PecaricBook} and \cite{Chunaev,Zhuang}). Namely, weights $p_k$, $k=1,\ldots,n$, satisfying the property
$$
P_k\ge 0, \qquad \text{where}\qquad P_k:=\sum_{m=1}^k p_m, \qquad k=1,\ldots,n,
$$
are considered. We follow proofs in \cite{Chunaev} with several changes in order to obtain our results.

In what follows, we denote non-negative sequences of real numbers in bold print, for example, $\mathbf{a}=\{a_k\}_{k=1}^{n}$ or $\mathbf{b}=\{b_k\}_{k=1}^{n}$, where $n$ is a positive integer or infinity. Expressions like $\mathbf{a}\equiv 1$ mean that all elements of $\mathbf{a}$ equal $1$.
In proofs we use several well-known inequalities for $\alpha,\beta\ge0$ and $p\ge 1$:
\begin{eqnarray}
\label{Jensen}    &(\alpha+\beta)^p\le \; 2^{p-1} (\alpha^p+\beta^p) &(\text{Jensen's inequality}); \\
\label{Young}     &\alpha \beta\le \frac{\alpha^p}{p}+\frac{\beta^q}{q},\quad  \frac{1}{p}+\frac{1}{q}=1 &(\text{Young's inequality});\\
\label{ineq+-} &p\,\beta^{p-1}\le \frac{\alpha^p-\beta^p}{\alpha-\beta}\le p\,\alpha^{p-1},\quad \alpha> \beta& (\text{See  \cite[Th.~41]{HLP}}).
 \end{eqnarray}

\section{H\"{o}lder type inequalities}
\label{Sec}
In this section, we show that there is no a direct analog of  H\"{o}lder's inequality in the case of our weights, but one of reverse H\"{o}lder's inequality exists. Note that reverse H\"{o}lder's inequalities for non-negative weights are well studied (see \cite{Zhuang}).

\begin{theorem} Let $\mathbf{a}$ and $\mathbf{b}$ be non-increasing and such that
$$
0<a\le a_k\le A <\infty, \qquad 0<b\le b_k\le B <\infty,\qquad k=1,\ldots,n.
$$
If, moreover, $P_k\ge 0$, $k=1,\ldots,n$, and $p,q>1$, $1/p+1/q=1$, then
\begin{equation}
0\le \frac{\left(\sum_{k=1}^n p_k a_k^q\right)^{1/q}\left(\sum_{k=1}^n p_k b_k^p\right)^{1/p}}{\sum_{k=1}^n p_k a_k b_k}
\le \left(pA/a\right)^{1/p}\left(qB/b\right)^{1/q}.
\label{BC}
\end{equation}
The left hand side of~$(\ref{BC})$ should be read as there exist no positive constant, depending on $a,A,b,B,p$ or $q$, which bounds the fraction in $(\ref{BC})$ from below.
\label{Holder1}
\end{theorem}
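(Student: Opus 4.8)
The plan is to prove the right-hand inequality of $(\ref{BC})$ by estimating the two sums in the numerator separately against $\sum_{k=1}^n p_k a_kb_k$, and to show that the lower bound $0$ cannot be improved by exhibiting a configuration where the fraction is $0$. The tool used throughout is Abel summation: for any non-increasing sequence $\mathbf c$,
\[
\sum_{k=1}^n p_k c_k = P_n c_n + \sum_{j=1}^{n-1} P_j\,(c_j-c_{j+1}),
\]
so if $P_k\ge0$ for all $k$ and $c_k\ge0$, then $\sum_{k=1}^n p_k c_k\ge P_nc_n\ge0$. Taking $c_k=a_k^q$ and $c_k=b_k^p$ (non-increasing and non-negative since $\mathbf a,\mathbf b$ are non-increasing and $p,q>1$) shows the numerator of $(\ref{BC})$ is $\ge0$, while taking $c_k=a_kb_k$ shows the denominator is $\ge0$, and in fact $>0$ as soon as $P_n>0$ (the denominator being positive is implicit in $(\ref{BC})$). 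This establishes the left-hand bound.

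For the right-hand bound I would first prove
\[
\sum_{k=1}^n p_k b_k^p\le p\,B^{p-1}\sum_{k=1}^n p_k b_k,\qquad
\sum_{k=1}^n p_k a_k^q\le q\,A^{q-1}\sum_{k=1}^n p_k a_k .
\]
For the first, apply Abel summation to $c_k=b_k^p$, bound each difference by $b_j^p-b_{j+1}^p\le p\,b_j^{p-1}(b_j-b_{j+1})\le p\,B^{p-1}(b_j-b_{j+1})$ using $(\ref{ineq+-})$ and $b_j\le B$, bound the boundary term by $b_n^pP_n\le p\,B^{p-1}b_nP_n$ (here $p\ge1$ and $b_nP_n\ge0$), and re-sum via Abel summation in the opposite direction; the second is identical with $q,A,\mathbf a$. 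Next, since the product of two non-negative non-increasing sequences is again non-increasing, the sequences $a_k(b_k-b)$ and $b_k(a_k-a)$ are non-negative and non-increasing, so the Abel inequality gives $\sum p_k a_kb_k\ge b\sum p_k a_k$ and $\sum p_k a_kb_k\ge a\sum p_k b_k$. Chaining the four estimates yields $\sum p_k a_k^q\le(qA^{q-1}/b)\sum p_k a_kb_k$ and $\sum p_k b_k^p\le(pB^{p-1}/a)\sum p_k a_kb_k$; raising the first to the power $1/q$ and the second to the power $1/p$, multiplying, and using $(q-1)/q=1/p$, $(p-1)/p=1/q$, $1/p+1/q=1$, the constant collapses to exactly $(pA/a)^{1/p}(qB/b)^{1/q}$ while $(\sum p_k a_kb_k)^{1/p+1/q}$ cancels the denominator.

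For the sharpness of the lower bound, take $n=2$, $p_1=1$, $p_2=-1$ (so $P_1=1\ge0$, $P_2=0\ge0$), $b_1=b_2=b$, and $a_2=a<a_1\le A$ (possible since $a<A$). Then $\sum p_k b_k^p=b^p-b^p=0$, so the numerator of $(\ref{BC})$ is $0$, whereas the denominator equals $b(a_1-a)>0$; hence the fraction equals $0$, and no positive constant depending only on $a,A,b,B,p,q$ can bound it from below. (When instead $a=A$ one perturbs $\mathbf b$ rather than $\mathbf a$; and one may equally use strictly decreasing $\mathbf a,\mathbf b$ with perturbations of different orders to make the fraction tend to $0$ continuously.)

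I expect the main obstacle to be the two displayed inequalities of the second paragraph: the key point is that $(\ref{ineq+-})$ upgrades Abel summation into the one-sided estimate $\sum p_k b_k^p\le pB^{p-1}\sum p_kb_k$, with the extra factor $p$ on the boundary term absorbed precisely because $p\ge1$, and that monotonicity of products of non-negative decreasing sequences is exactly what lets Abel summation lower-bound $\sum p_ka_kb_k$ by $b\sum p_ka_k$. Once these are in place, the remaining manipulation of the constant is routine.
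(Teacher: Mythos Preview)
Your proof is correct and reaches exactly the constant $(pA/a)^{1/p}(qB/b)^{1/q}$, but your route to the upper bound is genuinely different from the paper's. The paper, after the same first step $\sum p_k a_k^q\le qA^{q-1}\sum p_k a_k$ and its companion, does \emph{not} bound the two factors separately. Instead it introduces free parameters $C,D>0$, writes
\[
N_{\mathrm H}\le \frac{(qA^{q-1})^{1/q}(pB^{p-1})^{1/p}}{C^{1/q}D^{1/p}}\Bigl(\sum C p_k a_k\Bigr)^{1/q}\Bigl(\sum D p_k b_k\Bigr)^{1/p},
\]
applies Young's inequality $(\ref{Young})$ to merge the two brackets into a single sum $\sum p_k\bigl(\tfrac{C}{q b_k}+\tfrac{D}{p a_k}\bigr)a_kb_k$, then invokes Lemma~\ref{lemmaM} (monotone factor pulled out as its maximum) and finally optimises over $C/D$ to recover the stated constant. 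Your argument replaces this Young-plus-optimisation step by the two direct Abel estimates $\sum p_k a_kb_k\ge b\sum p_k a_k$ and $\sum p_k a_kb_k\ge a\sum p_k b_k$, which are the lower-bound counterparts of Lemma~\ref{lemmaM}. This is shorter and more elementary (no Young, no free parameters, no optimisation), and it makes transparent why the constant factors as it does. The paper's approach, on the other hand, is the one that naturally extends to the multivariable Proposition~\ref{Holder-mult} via the $M$-term Young inequality $(\ref{Young-mult})$. For the lower bound your $n=2$ example is a tidy specialisation of the paper's alternating-sign construction and works for the same reason.
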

Before the proof of Theorem~\ref{Holder1}, we establish the following fact.
\begin{lemma}
\label{lemmaM}
Let $\mathbf{a}$ be non-increasing, $\mathbf{b}$ be non-decreasing and such that ${b_k\le B}$ for $k=1,\ldots,n$. If, moreover, $P_k\ge 0$ for $k=1,\ldots,n$, then
$$
\sum_{k=1}^n p_k a_k b_k\le B \sum_{k=1}^n p_k a_k.
$$
\end{lemma}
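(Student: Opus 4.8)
The plan is to reduce the inequality to Abel's summation by parts applied to the auxiliary sequence $c_k := a_k\,(B-b_k)$, $k=1,\ldots,n$. Indeed, the claimed inequality $\sum_{k=1}^n p_k a_k b_k\le B\sum_{k=1}^n p_k a_k$ is equivalent to $\sum_{k=1}^n p_k c_k\ge 0$, so it suffices to prove the latter.

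First I would record the two structural properties of $\{c_k\}$ that make the argument work. Since $b_k\le B$ and $a_k\ge 0$, each $c_k\ge 0$; in particular $c_n\ge 0$. Moreover $\{a_k\}$ is non-increasing and non-negative, while $\{B-b_k\}$ is non-negative and non-increasing (because $\mathbf b$ is non-decreasing), so their termwise product $\{c_k\}$ is non-increasing, i.e.\ $c_k-c_{k+1}\ge 0$ for $k=1,\ldots,n-1$.

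Next I would apply summation by parts. Writing $P_0:=0$, so that $p_k=P_k-P_{k-1}$, one gets
$$
\sum_{k=1}^n p_k c_k=\sum_{k=1}^n (P_k-P_{k-1}) c_k=\sum_{k=1}^{n-1} P_k\,(c_k-c_{k+1})+P_n c_n .
$$
By hypothesis $P_k\ge 0$ for all $k$, and by the previous paragraph $c_k-c_{k+1}\ge 0$ and $c_n\ge 0$; hence every summand on the right is non-negative, and therefore $\sum_{k=1}^n p_k c_k\ge 0$. Unwinding the substitution $c_k=a_k(B-b_k)$ yields the lemma.

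There is essentially no serious obstacle here: the argument is the standard Abel/Steffensen-type trick exploiting the partial-sum condition $P_k\ge 0$ together with monotonicity, and it is precisely the pattern one expects to reuse throughout the paper. The only point that requires a moment's care is the observation that the termwise product of two non-negative non-increasing sequences is again non-increasing, which is what lets us combine the monotonicity of $\mathbf a$ with the bound $b_k\le B$ and the monotonicity of $\mathbf b$ into a single monotone non-negative sequence $\{c_k\}$.
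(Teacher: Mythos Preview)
Your proof is correct and is essentially identical to the paper's: both rewrite the difference $B\sum p_k a_k-\sum p_k a_k b_k$ as $\sum p_k c_k$ with $c_k=a_k(B-b_k)$, apply the Abel transformation to get $\sum_{k=1}^{n-1}P_k(c_k-c_{k+1})+P_n c_n$, and conclude non-negativity from $P_k\ge 0$ together with the fact that $\{c_k\}$ is non-negative and non-increasing (as the product of the non-negative non-increasing sequences $\{a_k\}$ and $\{B-b_k\}$). The paper's write-up is terser, but the argument is the same.
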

\begin{proof} Applying the Abel transformation, we have
$$
B \sum_{k=1}^n p_k a_k-\sum_{k=1}^n p_k a_k b_k
=\sum_{k=1}^{n-1}P_k (a_k(B-b_k)-a_{k+1}(B-b_{k+1}))+P_n a_n(B-b_n),
$$
where the latter expression is non-negative since the sequences $\mathbf{a}$ and $\{B-b_k\}$ are non-increasing,
and $P_k\ge 0$. The equality holds for example if $\mathbf{b}\equiv B$.
\end{proof}
\begin{proof} We denote the fraction in (\ref{BC}) by $F_{\text{H}}$.
Applying the Abel transformation to the numerator and the denominator of  $F_{\text{H}}$ easily yields ${F_{\text{H}}\ge 0}$. But we prove even more, namely, that there exist no positive constants bounding $F_{\text{H}}$ from below. Following~\cite{Chunaev}, let $p_k=(-1)^{k+1}$, $k=1,\ldots,n$, where $n$ is even, and $\mathbf{a}=\{a_1,a_1,a_3,a_3,\ldots,a_n,a_n,\ldots\}$ be positive and non-decreasing. The sequence $\mathbf{b}$ is arbitrary except such that ${b_{2k-1}-b_{2k}=0}$ for all $k=1,\ldots,n/2$. It follows that
$$
F_{\text{H}}
=\frac{0\cdot\left(\sum_{k=1}^n (-1)^{k+1}b_k^p\right)^{1/p}}{\sum_{k=1}^{n/2} a_{2k-1}( b_{2k-1}-b_{2k})}=0.
$$
Thus $F_{\text{H}}$ cannot be bounded from below by a positive absolute constant or a constant depending on $p$, $q$, maximum or minimum elements of $\mathbf{a}$ and $\mathbf{b}$.

Now we prove the right hand side of (\ref{BC}). Here $N_{\text{H}}$ denotes the numerator of~$F_{\text{H}}$. First we apply the Abel transformation:
$$
N_{\text{H}}=
\left(\sum_{k=1}^{n-1} P_k(a_k^q-a_{k+1}^q)+ P_n a_n^q \right)^{1/q}
\left(\sum_{k=1}^{n-1} P_k(b_k^p-b_{k+1}^p)+ P_n b_n^p \right)^{1/p}.
$$
By the right hand side of (\ref{ineq+-}) and the Abel transformation
$$
N_{\text{H}}\le
\frac{(qA^{q-1})^{1/q}(pB^{p-1})^{1/p}}{C^{1/q}D^{1/p}}
\left(\sum_{k=1}^{n} C p_k a_k\right)^{1/q}
\left(\sum_{k=1}^{n} D p_k b_k\right)^{1/p},
$$
where $C$ and $D$ are arbitrary positive constants. Therefore, (\ref{Young}) after several simplifications gives
$$
N_{\text{H}} \le
\frac{(pA)^{1/p}(qB)^{1/q}}{C^{1/q}D^{1/p}}\left(\sum_{k=1}^n p_k\left(\frac{C}{qb_k}+\frac{D}{pa_k}\right)a_k b_k \right).
$$
In the latter expression, $\{C/(qb_k)+D/(pa_k)\}$ is non-decreasing and $\{a_k b_k\}$ is non-increasing, because $\mathbf{a}$ and $\mathbf{b}$ are non-increasing. Hence by Lemma~\ref{lemmaM}
\begin{equation*}
\begin{split}
N_{\text{H}}&\le \frac{(pA)^{1/p}(qB)^{1/q}}{C^{1/q}D^{1/p}}
\max_k\left\{\frac{C}{qb_k}+\frac{D}{pa_k}\right\}\sum_{k=1}^n p_k a_k b_k\\
            &\le
(pA)^{1/p}(qB)^{1/q}
\left(\frac{1}{qb}\left(\frac{C}{D}\right)^{1/p}+\frac{1}{pa}\left(\frac{D}{C}\right)^{1/q}\right)\sum_{k=1}^n p_k a_k b_k.
\end{split}
\end{equation*}
It is easily seen that in order to get the smallest constant in the latter inequality, we must choose
$C/D=b/a$. It gives the right hand side of (\ref{BC}). Note that the constant there belongs to $(1;\infty)$.
\end{proof}
\begin{remark}
From Theorem~\ref{Holder1}, it is seen that the constant in the right hand side of (\ref{BC}) tends to infinity as $a\to 0$ or $b\to 0$ (note that this constant is better than in \cite{Chunaev}).
Now we give an example of sequences confirming this \cite{Chunaev}.  In Theorem~\ref{Holder1} we suppose that $p_k=(-1)^{k+1}$, $n=2m+1$, $\mathbf{a}\equiv 1$ and $b=b_{2m+1}\to 0$ in $\mathbf{b}$. It gives
\begin{equation*}
\begin{split}
F_{\text{H}}&=\frac{\left(\sum_{k=1}^{2m+1} (-1)^{k+1}a_k^q\right)^{1/q}\left(\sum_{k=1}^{2m+1} (-1)^{k+1}b_k^p\right)^{1/p}}
{\sum_{k=1}^{2m+1} (-1)^{k+1}a_k b_k}\\
            &= \frac{\left(\sum_{k=1}^{2m} (-1)^{k+1}b_k^p\right)^{1/p}}
{\sum_{k=1}^{2m} (-1)^{k+1} b_k}.
\end{split}
\end{equation*}
From the left hand side of (\ref{ineq+-}) we deduce
$$
F_{\text{H}}=\frac{\left(\sum_{k=1}^{m} (b_{2k-1}^p-b_{2k}^p)\right)^{1/p}}
{\sum_{k=1}^{m} (b_{2k-1}-b_{2k})}\ge
p^{1/p}\left(\frac{b_{2m}}{\sum_{k=1}^{m}(b_{2k-1}-b_{2k})}\right)^{1-1/p},
$$
where $1-1/p>0$. Therefore, for a fixed positive $b_{2m}$  the sum in the denominator can be made sufficiently small by an appropriate choice of~$\mathbf{b}$. Consequently, $F_{\text{H}}$ can be arbitrarily large. The same is for $a=a_{2m+1}\to0$.
\end{remark}
It is clear that if $p=q=2$ then the constant in the right hand side of (\ref{BC}) equals $2\sqrt{AB(ab)^{-1}}\ge 2$. Now we give a more precise constant belonging to $[1;\infty)$ for the case when $\mathbf{a}$ and $\mathbf{b}$ satisfy several additional conditions.
\begin{propos}
\label{Cauchy1}
Let $\mathbf{a}$ and $\mathbf{b}$ be non-increasing and such that the sequence $\{a_k/b_k\}$ is monotone and $0<m\le a_k/b_k\le M<\infty$. If, moreover, $P_k\ge 0$ for $k=1,\ldots,n$, then
\begin{equation}
\label{CBS2}
0\le \frac{\sum_{k=1}^n p_k a_k^2 \sum_{k=1}^n p_k b_k^2}{\left(\sum_{k=1}^n p_k a_kb_k\right)^2}\le \tfrac{1}{4}\left(\tfrac{m}{M}+\tfrac{M}{m}\right)^2.
\end{equation}
The left hand side of~$(\ref{CBS2})$ should be read as there exists no positive constant, depending on $m$ and $M$, which bounds the fraction in $(\ref{CBS2})$ from below.
\end{propos}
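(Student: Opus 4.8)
The plan is to follow the scheme of the proof of Theorem~\ref{Holder1} in two steps.

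\emph{The lower bound in $(\ref{CBS2})$.} I would reuse the degenerate construction from that proof: take $n$ even, $p_k=(-1)^{k+1}$ (so $P_{2j-1}=1$, $P_{2j}=0$, all non-negative), $\mathbf a\equiv 1$, and $\mathbf b$ non-increasing with $b_{2j-1}>b_{2j}$ for at least one $j$. Then $\{a_k/b_k\}=\{1/b_k\}$ is non-decreasing and lies between $1/b_1$ and $1/b_n$, while $\sum_{k=1}^n p_k a_k^2=\sum_{k=1}^n(-1)^{k+1}=0$ and $\sum_{k=1}^n p_k a_k b_k=\sum_j (b_{2j-1}-b_{2j})>0$, so the fraction equals $0$; hence no positive constant, in particular none depending only on $m$ and $M$, bounds it from below. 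The inequality $0\le(\text{fraction})$ is immediate from the Abel transformation, which exhibits each of $\sum p_k a_k^2$, $\sum p_k b_k^2$, $\sum p_k a_k b_k$ as a sum of non-negative terms (differences, resp.\ products, of non-increasing non-negative sequences multiplied by $P_k\ge 0$).

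\emph{The upper bound in $(\ref{CBS2})$.} Apply the Abel transformation to all three sums; with the convention $a_{n+1}=b_{n+1}=0$ this gives $\sum p_k a_k^2=\sum_{k=1}^n P_k\alpha_k$, $\sum p_k b_k^2=\sum_{k=1}^n P_k\beta_k$, $\sum p_k a_k b_k=\sum_{k=1}^n P_k\gamma_k$, where $\alpha_k=a_k^2-a_{k+1}^2\ge 0$, $\beta_k=b_k^2-b_{k+1}^2\ge 0$, $\gamma_k=a_kb_k-a_{k+1}b_{k+1}\ge 0$. The identity $\gamma_k^2-\alpha_k\beta_k=(a_kb_{k+1}-a_{k+1}b_k)^2\ge 0$ gives $\gamma_k\ge\sqrt{\alpha_k\beta_k}$, hence $\sum p_k a_k b_k\ge\sum_{k=1}^n P_k\sqrt{\alpha_k\beta_k}$. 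Since the $P_k$ are non-negative, it then suffices to invoke the classical weighted reverse Cauchy--Schwarz (P\'olya--Szeg\H{o}) inequality: if $\mu\le\sqrt{\alpha_k/\beta_k}\le\nu$, then
\[
\Bigl(\sum_{k=1}^n P_k\alpha_k\Bigr)\Bigl(\sum_{k=1}^n P_k\beta_k\Bigr)\le\frac{(\mu+\nu)^2}{4\mu\nu}\Bigl(\sum_{k=1}^n P_k\sqrt{\alpha_k\beta_k}\Bigr)^2\le\frac{(\mu+\nu)^2}{4\mu\nu}\Bigl(\sum_{k=1}^n p_k a_k b_k\Bigr)^2 .
\]
By the $a\leftrightarrow b$ symmetry of $(\ref{CBS2})$ (which sends $(m,M)$ to $(1/M,1/m)$ and leaves the constant unchanged) it is enough to treat the case when $\{a_k/b_k\}$ is non-decreasing; writing $a_k=r_kb_k$ with $m\le r_k\le M$ and using $a_kb_k\ge a_{k+1}b_{k+1}$ together with $M-r_k\ge M-r_{k+1}\ge 0$, one checks $\alpha_k\le M^2\beta_k$, so that one aims to take the ratios in a band with $\nu/\mu\le M^2/m^2$, which yields $\frac{(\mu+\nu)^2}{4\mu\nu}\le\frac14(m/M+M/m)^2$; that this constant lies in $[1,\infty)$ follows from $m/M+M/m\ge 2$.

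The hard part is exactly pinning down the ratio $\alpha_k/\beta_k$. Because one cannot sum pointwise inequalities here, a uniform two-sided bound fails outright: at a step where $\mathbf a$ (resp.\ $\mathbf b$) is locally constant, $\alpha_k/\beta_k$ degenerates to $0$ (resp.\ $+\infty$). The remedy is to isolate such indices using the extra monotonicity hypothesis: in the non-decreasing case one has, pointwise, $\gamma_k\ge\frac1M\alpha_k$ (from $a_kb_k(M-r_k)\ge a_{k+1}b_{k+1}(M-r_{k+1})$) and $\gamma_k\ge\frac m2\beta_k$ (from $b_k^2(r_k-\tfrac m2)\ge b_{k+1}^2(r_{k+1}-\tfrac m2)$, which reduces to the harmonic mean of $r_k,r_{k+1}$ being $\ge m$, via $b_{k+1}^2\le(r_k/r_{k+1})^2 b_k^2$); one then bounds the contribution of the degenerate indices directly by these estimates and applies P\'olya--Szeg\H{o} only on the remaining, genuinely ratio-bounded, indices. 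As a more elementary alternative I would keep the two-case split and use Lemma~\ref{lemmaM} with the pair $\{a_kb_k\}$, $\{a_k/b_k\}$ to get $\sum p_k a_k^2\le M\sum p_k a_k b_k$ and $\sum p_k a_k b_k\le M\sum p_k b_k^2$, and then squeeze the product with the auxiliary inequalities $\sum p_k(\lambda a_k-\lambda^{-1}b_k)^2\ge 0$, valid for $0<\lambda\le M^{-1/2}$ since their summand $\lambda^{-2}b_k^2(\lambda^2 r_k-1)^2$ is then non-increasing and the Abel transformation applies.
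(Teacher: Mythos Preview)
Your treatment of the lower bound is fine and matches the paper. The upper-bound argument, however, does not close. The P\'olya--Szeg\H{o} step needs a two-sided band $\mu^2\le\alpha_k/\beta_k\le\nu^2$ with $\nu/\mu\le M/m$; you verify $\alpha_k\le M^2\beta_k$, but the companion bound $\alpha_k\ge m^2\beta_k$ is simply false even at non-degenerate indices. For instance, with $m=1$, $M=2$, take $b_k=2$, $b_{k+1}=1$, $r_k=1$, $r_{k+1}=1.9$ (so $a_k=2$, $a_{k+1}=1.9$): then $\alpha_k=0.39$, $\beta_k=3$, and $\alpha_k/\beta_k\approx 0.13<1=m^2$. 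So there is no ``genuinely ratio-bounded'' set of indices on which P\'olya--Szeg\H{o} yields the constant $\tfrac14(m/M+M/m)^2$; separating out the degenerate indices does not fix this. Your ``elementary alternative'' also falls short: the auxiliary inequality $\sum p_k(\lambda a_k-\lambda^{-1}b_k)^2\ge0$ gives a \emph{lower} bound $\lambda^2 X+\lambda^{-2}Y\ge 2Z$, which is in the wrong direction for bounding $XY/Z^2$ from above, and combining $X\le MZ$ with a crude bound on $Y$ gives at best $XY\le (M/m)Z^2$, which is weaker than the stated constant whenever $M/m>1$.

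The paper's route is far shorter and avoids the Abel transformation in the upper bound entirely. In the non-decreasing case set $c_k:=a_k/(mb_k)\in[1,M/m]$; by AM--GM,
\[
N_{\text C}=\frac{1}{m^2}\Bigl(\sum p_k a_k^2\Bigr)\Bigl(\sum p_k (mb_k)^2\Bigr)\le \frac{1}{4m^2}\Bigl(\sum p_k(a_k^2+m^2b_k^2)\Bigr)^2=\frac14\Bigl(\sum p_k\bigl(c_k+c_k^{-1}\bigr)a_kb_k\Bigr)^2.
\]
Since $c_k\ge 1$ is non-decreasing and $x\mapsto x+1/x$ is increasing on $[1,\infty)$, the factor $c_k+c_k^{-1}$ is non-decreasing and bounded by $M/m+m/M$; Lemma~\ref{lemmaM} (with $\{a_kb_k\}$ non-increasing) then gives the right-hand side of~(\ref{CBS2}) in one stroke. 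The non-increasing case is handled symmetrically with the scaling $M$ in place of $m$. The point you were missing is precisely this scaling, which forces $c_k\ge 1$ and makes $c_k+c_k^{-1}$ monotone so that Lemma~\ref{lemmaM} applies.
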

\begin{proof} The left hand side inequality follows by the same method as in the proof of Theorem~\ref{Holder1}.
To prove the right hand side we denote  the numerator of the fraction in (\ref{CBS2}) by $N_{\text{C}}$.
First we suppose  $\{a_k/b_k\}$ to be non-decreasing, so $1\le a_k/(mb_k)\le M/m$.
Applying (\ref{Young}) with $p=q=2$ yields
\begin{equation*}
\begin{split}
N_{\text{C}}\le \frac{1}{4m^2}\left(\sum_{k=1}^np_k({a_k}^2+{(mb_k)}^2)\right)^2
            = \frac{1}{4}\left(\sum_{k=1}^n p_k\left(\frac{{a_k}}{mb_k}+\frac{mb_k}{{a_k}}\right){a_k} b_k\right)^2.
\end{split}
\end{equation*}
In the latter expression, the sequence $\{c_k+1/c_k\}$, where $c_k=a_k/(mb_k)$, is non-decreasing. Indeed, $\{c_k\}$ is  non-decreasing and  moreover $c_1\ge 1$. Since $f(x)=x+1/x$ is convex for $x\in(0;\infty)$ and has a minimum at $x=1$, the sequence $\{f(c_k)\}$ is non-decreasing. From this by Lemma~\ref{lemmaM}
\begin{equation*}
\begin{split}
N_{\text{C}}\le \frac{1}{4}\left(\max_k\left\{f(c_k)\right\}\right)^2
\left(\sum_{k=1}^n(-1)^{k+1}a_k b_k\right)^2,
\end{split}
\end{equation*}
where $\max_k\left\{f(c_k)\right\}=m/M+M/m$. Supposing  $\{a_k/b_k\}$ to be non-increasing and taking into account that $m/M\le a_k/(Mb_k)\le 1$, we obtain the right hand side of (\ref{CBS2}) by the same technique.

It is easily seen that equality in (\ref{CBS2}) holds for example if $\mathbf{a}\equiv \mathbf{b}$. The fact that the constant in the right hand side of (\ref{CBS2}) belongs to $[1;\infty)$ is obvious.
\end{proof}
From the well-known weighted inequality of arithmetic and geometric means (see for example \cite[Ch. 2]{HLP}) supposing $a_m\ge 0$ and $v_m>0$, we have
\begin{equation}
\label{Young-mult}
\prod_{m=1}^M a_m \le \sum_{m=1}^M v_m a_m^{1/v_m}, \qquad \sum_{m=1}^M v_m=1.
\end{equation}
This is a multivariable version of Young's inequality~(\ref{Young}). From this we obtain a multivariable version of Theorem~\ref{Holder1} (but with less precise constant).
\begin{propos}
Let $\mathbf{x}_m:=\{x_{m,k}\}_{k=1}^n$ be non-increasing sequences such that
$0<a_{m}\le x_{m,k}\le A_m <\infty$, where $m=1,\ldots,M$. If, moreover, $P_k\ge 0$, ${k=1,\ldots,n}$,
and $w_k>0$, $k=1,\ldots,n$, are such that $\sum_{m=1}^M w_m=1$, then
\begin{equation}
0\le \frac{\prod_{m=1}^M \left(\sum_{k=1}^n p_k x_{m,k}^{1/w_m}\right)^{w_m}}{\sum_{k=1}^n p_k \prod_{m=1}^M x_{m,k}}
\le \sum_{m=1}^M A_m^{1/w_m-1} \prod_{j=1, j\neq m}^M\frac{A_j^{1/w_j-1}}{w_j a_j}.
\label{BC-mult}
\end{equation}
The left hand side of~$(\ref{BC-mult})$ should be read as there exists no positive constant, depending on $a_m,A_m$ and $w_m$, which bounds the fraction in $(\ref{BC-mult})$ from below.
\label{Holder-mult}
\end{propos}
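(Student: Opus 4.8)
The plan is to follow the proof of Theorem~\ref{Holder1} line by line, with the multivariable Young inequality~(\ref{Young-mult}) playing the role of~(\ref{Young}) and Lemma~\ref{lemmaM} applied to the product sequence $\{\prod_{m=1}^M x_{m,k}\}_k$ rather than to $\{a_kb_k\}_k$. For the left-hand side of~(\ref{BC-mult}), the Abel transformation applied to the numerator and to the denominator already gives non-negativity, and no positive lower bound is possible: take $n$ even and $p_k=(-1)^{k+1}$ (so every $P_k\in\{0,1\}$, and the hypotheses hold), let $\mathbf{x}_1$ be constant, so that $\sum_k(-1)^{k+1}x_{1,k}^{1/w_1}=0$, and choose $\mathbf{x}_2,\dots,\mathbf{x}_M$ non-increasing and not all constant, so that $\sum_k(-1)^{k+1}\prod_{m=1}^M x_{m,k}>0$; then the fraction is $0$ independently of the $a_m,A_m,w_m$.

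For the right-hand side, write $N$ for the numerator. First I would apply the Abel transformation to each of the $M$ factors,
$$
\sum_{k=1}^n p_k x_{m,k}^{1/w_m}=\sum_{k=1}^{n-1}P_k\bigl(x_{m,k}^{1/w_m}-x_{m,k+1}^{1/w_m}\bigr)+P_n x_{m,n}^{1/w_m}\ge 0,
$$
the sign coming from $P_k\ge0$ together with the monotonicity of $\mathbf{x}_m$. Since $1/w_m\ge1$, the right inequality in~(\ref{ineq+-}) bounds each difference by $\tfrac1{w_m}A_m^{1/w_m-1}(x_{m,k}-x_{m,k+1})$ and the last term by $A_m^{1/w_m-1}x_{m,n}$, and transforming back by Abel yields
$$
0\le\sum_{k=1}^n p_k x_{m,k}^{1/w_m}\le\frac{A_m^{1/w_m-1}}{w_m}\sum_{k=1}^n p_k x_{m,k},\qquad m=1,\dots,M .
$$

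Raising the $m$-th inequality to the power $w_m$ and multiplying, then using~(\ref{Young-mult}) with weights $v_m=w_m$ to collapse the resulting product of powers into a single sum over $k$ (one may also introduce auxiliary positive constants at this stage, in the style of the $C,D$ in the proof of Theorem~\ref{Holder1}), one arrives at a bound of the form
$$
N\le\sum_{k=1}^n p_k\Bigl(\sum_{m=1}^M\frac{\lambda_m}{\prod_{j\neq m}x_{j,k}}\Bigr)\prod_{m=1}^M x_{m,k},
$$
with constants $\lambda_m>0$ assembled from the $A_m,w_m$. Now Lemma~\ref{lemmaM} applies: $\{\prod_m x_{m,k}\}_k$ is non-increasing, being a product of non-increasing positive sequences, while $\bigl\{\sum_m\lambda_m/\prod_{j\neq m}x_{j,k}\bigr\}_k$ is non-decreasing, being a positive combination of reciprocals of such products; so the inner factor may be replaced by its maximum, which is then estimated through $x_{j,k}\ge a_j$. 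Dividing by $\sum_k p_k\prod_m x_{m,k}$ and making the final choice of constants produces a bound of the form displayed on the right of~(\ref{BC-mult}); testing $\mathbf{x}_1\equiv\cdots\equiv\mathbf{x}_M$ shows that the constant is $\ge1$.

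The two genuinely non-routine points I anticipate are the following. First, verifying that the coefficient sequence emerging from~(\ref{Young-mult}) is monotone in the direction that Lemma~\ref{lemmaM} demands — this is precisely where the hypothesis that \emph{every} $\mathbf{x}_m$ is non-increasing is used in full, and it is the step with no analogue in the one-variable argument. Second, the bookkeeping of constants: the order in which~(\ref{Young-mult}) is invoked and in which any auxiliary constants are absorbed changes the final constant, so some care — and a deliberately crude estimate at the end, reflecting that the constant in~(\ref{BC-mult}) is less precise than that of Theorem~\ref{Holder1} — is needed to arrive at exactly the stated form rather than at some other admissible bound.
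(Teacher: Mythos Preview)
Your proposal is correct and follows essentially the same route as the paper: Abel transformation plus~(\ref{ineq+-}) on each factor to obtain $\sum_k p_k x_{m,k}^{1/w_m}\le \tfrac{A_m^{1/w_m-1}}{w_m}\sum_k p_k x_{m,k}$, then~(\ref{Young-mult}) with $v_m=w_m$, then Lemma~\ref{lemmaM} applied to the non-increasing product $\{\prod_m x_{m,k}\}$ against the non-decreasing coefficient $\{\sum_m w_m\prod_{j\ne m}x_{j,k}^{-1}\}$. Two minor remarks: the paper does \emph{not} introduce auxiliary constants at the Young step (it takes exactly the crude route you anticipate, which is why the constant is less sharp than in Theorem~\ref{Holder1}), and for the left-hand side it simply cites Theorem~\ref{Holder1} rather than rewriting the explicit example; also, your bound $A_m^{1/w_m-1}x_{m,n}$ on the last Abel term needs the harmless extra factor $1/w_m\ge1$ before you can transform back.
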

\begin{proof} Set $F_{\text{H}}$ is the fraction in (\ref{BC-mult}). Non-existence of a positive constant bounding $F_{\text{H}}$ from below, follows from Theorem~\ref{Holder1}.
To prove the right hand side we denote the numerator of~$F_{\text{H}}$ by $N_{\text{H}}$. By the Abel transformation
$$
N_{\text{H}}=
\prod_{m=1}^M \left(\sum_{k=1}^{n-1} P_k (x_{m,k}^{1/w_m}-x_{m,k+1}^{1/w_m})+P_n x_{m,n}^{1/w_m}\right)^{w_m}.
$$
The right hand side of (\ref{ineq+-}) and the Abel transformation yields
$$
N_{\text{H}}\le
\prod_{m=1}^M \frac{A_m^{1/w_m-1}}{w_m}
\prod_{m=1}^M \left(\sum_{k=1}^{n} p_k x_{m,k}\right)^{w_m}.
$$
Supposing $v_m=w_m$ in (\ref{Young-mult}), we obtain
$$
N_{\text{H}} \le
\prod_{m=1}^M \frac{A_m^{1/w_m-1}}{w_m}
\left(\sum_{k=1}^n p_k\left(\sum_{m=1}^M w_m \prod_{m=1, m\neq k}^M x^{-1}_{m,k}\right)\prod_{m=1}^M x_{m,k} \right),
$$
where it is obvious that $\{\sum_{m=1}^M w_m \prod_{m=1, m\neq k}^M x^{-1}_{m,k}\}_{k=1}^n$ is non-decreasing and $\{\prod_{m=1}^M x_{m,k}\}_{k=1}^n$ is non-increasing. Thus by Lemma~\ref{lemmaM}
\begin{equation*}
\begin{split}
N_{\text{H}}\le \prod_{m=1}^M \frac{A_m^{1/w_m-1}}{w_m}
\max_k\left\{\sum_{m=1}^M w_m \prod_{m=1, j\neq m}^M x^{-1}_{j,k}\right\}\sum_{k=1}^n p_k \prod_{m=1}^M x_{m,k}.
\end{split}
\end{equation*}
Several simplifications give the right hand side of (\ref{BC-mult}).
\end{proof}

\section{Minkowski type inequalities}

In this section we prove precise Minkowski type inequalities with our weights. As we have already mentioned, these generalize both the case of weight with alternating signs and the case of non-negative weights (see \cite{Chunaev}).
\begin{theorem}
\label{Mink}
Let $\mathbf{a}$ and $\mathbf{b}$ be non-negative non-increasing sequences, and $P_k\ge 0$ for $k=1,\ldots,n$. Then
\begin{equation}
\label{Minn}
0\le \frac{\left(\sum_{k=1}^n p_k a_k^p\right)^{1/p}+\left(\sum_{k=1}^n p_k b_k^p\right)^{1/p}}{\left(\sum_{k=1}^n p_k (a_k +b_k)^p\right)^{1/p}} \le 2^{1-1/p}, \quad p\ge 1.
\end{equation}
The constant $2^{1-1/p}$ is best possible. The left hand side of $(\ref{Minn})$ should be read as there exists no positive constant, depending on only $p$, which bounds the fraction in~$(\ref{Minn})$ from below.
\label{Mink0}
\end{theorem}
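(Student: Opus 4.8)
The plan is to treat the four assertions separately. Write $N_1:=\sum_{k=1}^n p_k a_k^p$, $N_2:=\sum_{k=1}^n p_k b_k^p$ and $D:=\sum_{k=1}^n p_k(a_k+b_k)^p$, and let $F_{\text{M}}$ denote the fraction in (\ref{Minn}). Applying the Abel transformation to $N_1$, $N_2$ and $D$, exactly as in the proof of Theorem~\ref{Holder1}, and using that $\{a_k^p\}$, $\{b_k^p\}$ and $\{(a_k+b_k)^p\}$ are non-increasing while $P_k\ge 0$, gives $N_1,N_2,D\ge 0$, hence $F_{\text{M}}\ge 0$. For the non-existence of a positive lower bound (for $p=1$ the fraction equals $1=2^{1-1/p}$ identically, so this assertion pertains to $p>1$) I would take $n=2$, $p_1=1$, $p_2=-1$ (so $P_1=1$, $P_2=0$), $\mathbf{a}\equiv c$ and $\mathbf{b}=\{1,0\}$: then $N_1=0$, $N_2=1$, while by the left inequality in (\ref{ineq+-}) the denominator is $\big((c+1)^p-c^p\big)^{1/p}\ge (p c^{p-1})^{1/p}\to\infty$ as $c\to\infty$, so $F_{\text{M}}=\big((c+1)^p-c^p\big)^{-1/p}\to 0$; thus no constant depending only on $p$ (or on the extreme values of $\mathbf{a}$, $\mathbf{b}$) bounds $F_{\text{M}}$ from below.

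The heart of the proof is the inequality $N_1+N_2\le D$. Put $c_k:=(a_k+b_k)^p-a_k^p-b_k^p$. Since $t\mapsto t^p$ is superadditive on $[0,\infty)$ for $p\ge 1$, every $c_k\ge 0$; moreover $g(s,t):=(s+t)^p-s^p-t^p$ is non-decreasing in each of $s$ and $t$ on $[0,\infty)^2$, since $\partial_s g=p\big((s+t)^{p-1}-s^{p-1}\big)\ge 0$ (and symmetrically for $\partial_t g$) because $x\mapsto x^{p-1}$ is non-decreasing for $p\ge 1$, a monotonicity already implicit in (\ref{ineq+-}). As $\mathbf{a}$ and $\mathbf{b}$ are non-increasing, $\{c_k\}=\{g(a_k,b_k)\}$ is therefore non-increasing, so the Abel transformation yields
$$
D-N_1-N_2=\sum_{k=1}^n p_k c_k=\sum_{k=1}^{n-1}P_k(c_k-c_{k+1})+P_n c_n\ge 0,
$$
every summand being non-negative thanks to $P_k\ge 0$, $c_k\ge c_{k+1}$ and $c_n\ge 0$. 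Applying Jensen's inequality (\ref{Jensen}) with $\alpha=N_1^{1/p}$ and $\beta=N_2^{1/p}$ gives $\big(N_1^{1/p}+N_2^{1/p}\big)^p\le 2^{p-1}(N_1+N_2)\le 2^{p-1}D$; extracting $p$-th roots and dividing by $D^{1/p}$ gives $F_{\text{M}}\le 2^{1-1/p}$ (if $D=0$ then $N_1=N_2=0$ and the inequality is trivial).

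Finally, for sharpness I would exhibit admissible data with $F_{\text{M}}\to 2^{1-1/p}$. Take $p_k\equiv 1$ (so $P_k=k\ge 0$), $\mathbf{a}\equiv 1$ and $\mathbf{b}=\{n^{1/p},0,\ldots,0\}$, both non-negative and non-increasing; then $N_1=N_2=n$, whereas $D=(1+n^{1/p})^p+(n-1)=2n+o(n)$, so $F_{\text{M}}=2n^{1/p}/D^{1/p}=2^{1-1/p}(1+o(1))\to 2^{1-1/p}$ as $n\to\infty$, and hence $2^{1-1/p}$ cannot be lowered. The step I expect to need the most care is the monotonicity of $\{c_k\}$, on which all of $N_1+N_2\le D$ depends; note that, unlike in Theorem~\ref{Holder1}, neither Young's inequality nor Lemma~\ref{lemmaM} enters here.
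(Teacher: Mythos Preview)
Your proof is correct and follows essentially the same route as the paper's: non-negativity via Abel summation, the upper bound via Jensen's inequality~(\ref{Jensen}) combined with $N_1+N_2\le D$ established through the monotonicity of $c_k=(a_k+b_k)^p-a_k^p-b_k^p$ and another Abel summation, and the identical sharpness example $p_k\equiv 1$, $\mathbf{a}\equiv 1$, $\mathbf{b}=\{n^{1/p},0,\ldots,0\}$. The only variation is your no-lower-bound example (you send $c=A_{\mathbf{a}}\to\infty$ rather than letting the minimum of $\mathbf{b}$ tend to $0$ as the paper does); this is equally valid for the theorem's claim about constants depending only on $p$, though your parenthetical remark about extreme values slightly overreaches since in your example $c\to\infty$.
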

\begin{proof}
Throughout the proof,  $F_{\text{M}}$ denotes the fraction in (\ref{Minn}).
Applying the Abel transformation for the numerator and the denominator of  $F_{\text{M}}$ easily yields $F_{\text{M}}\ge 0$.
Moreover, there exists no positive constant depending on $p$ only that bounds $F_{\text{M}}$ from below. Indeed \cite{Chunaev}, for each $p>1$ there exists a sequence such that $F_{\text{M}}$  tends to zero. Supposing that $p_k=(-1)^{k+1}$, $n\ge 2$, $\mathbf{a}=\{1,1,0,\ldots,0,\ldots\}$ and $\mathbf{b}=\{b,0,\ldots,0,\ldots\}$ with some $b>0$, from the left hand side of (\ref{ineq+-}) we deduce
$$
F_{\text{M}}=\frac{b}{\left((1+b)^p-1\right)^{1/p}}\le\frac{b}{(pb)^{1/p}}<b^{1-\frac{1}{p}}.
$$
In this way $F_{\text{M}}\to 0$ as $b\to 0$ since $1-1/p>0$ for all $p>1$.

Now we prove the right hand side of (\ref{Minn}). From (\ref{Jensen}) we have
$$
\left(\left(\sum_{k=1}^n p_k a_k^p\right)^{1/p}+\left(\sum_{k=1}^n p_k b_k^p\right)^{1/p}\right)^p
\le 2^{p-1}\left(\sum_{k=1}^n p_k(a_k^p+b_k^p)\right).
$$
Now, before extraction the $p$\;th root, it is enough to show that
\begin{equation}
\label{ineq22}
\sum_{k=1}^n p_k(a_k^p+b_k^p)\le \sum_{k=1}^n p_k(a_k +b_k)^p, \qquad p\ge 1,
\end{equation}
The inequality~(\ref{ineq22}) by the Abel transformation is equivalent to
$$
\sum_{k=1}^n p_k c_k=\sum_{k=1}^{n-1} P_k\left(c_k-c_{k+1}\right)+P_n c_n\ge 0.
$$
where $c_k:=(a_k +b_k)^p-(a_k^p+b_k^p)$. The latter inequality holds since $P_k\ge 0$ for all $k$ and  $c_k\ge c_{k+1}$ for $k=1,\ldots,n-1$. Indeed, for the function $f(x,y)=(x+y)^p-(x^p+y^p)$, where $x\ge0$, $y\ge 0$ and $p\ge 1$, we have $f'_x\ge0$ and $f'_y\ge 0$. Therefore,
$$
f(a_k,y)\ge f(a_{k+1},y),\; f(x,b_{k})\ge f(x,b_{k+1}) \; \Rightarrow\; f(a_k,b_k)\ge f(a_{k+1},b_{k+1}).
$$
This completes the proof of (\ref{ineq22}).

The precision of the constant $2^{1-1/p}$  is come out from the following observation from~\cite{Chunaev}. If $p_k=1$ for all $k$, $\mathbf{a}=\{1,\ldots,1,0,\ldots,0\}$ (first $n$ elements are units) and $\mathbf{b}=\{n^{1/p},0,\ldots,0\}$, then after several simplifications we get
$$
\frac{\left(\sum_{k=1}^n a_k^p\right)^{1/p}+\left(\sum_{k=1}^n b_k^p\right)^{1/p}}{\left(\sum_{k=1}^n (a_k +b_k)^p\right)^{1/p}}=
2\left(1-\tfrac{1}{n}+\left(1+\tfrac{1}{n^{1/p}}\right)^p\right)^{-1/p}=2^{1-1/p}-\varepsilon_n,
$$
where positive $\varepsilon_n\to 0$ as $n\to \infty$.
\end{proof}
\begin{remark}
\label{rem1}
The following Jensen-Steffensen type statement was proved in \cite{Barlow} (see also \cite[\S2.2]{PecaricBook}).

\textit{Let $\textbf{a}$ be a non-increasing positive sequence and $\varphi$ be a function convex on $[a_n;a_1]$ and such that $\varphi(0)=0$. Then the necessary and sufficient condition on weights $p_k$ in order that}
$$
\varphi\left(\sum_{k=1}^n p_k a_k\right)\le \sum_{k=1}^n p_k \varphi(a_k), \qquad P_k=\sum_{m=1}^kp_m,
$$
\textit{is $0\le P_k\le 1$, $k=1,\ldots,n$.}

From this point of view, the sufficient condition $P_k\ge 0$, $k=1,\ldots,n$, in Theorems~\ref{Holder1} and~\ref{Mink} seems to be quite close to the necessary one.
\end{remark}

\section{Further generalizations}

Now we give integral versions of Lemma~\ref{lemmaM} and Theorems~\ref{Holder1} and~\ref{Mink}.
In what follows, we use the notation
\begin{equation}
\label{P(x)}
P(x):=\int_\alpha^x p(t)\,dt, \qquad x\in[\alpha;\beta],
\end{equation}
and suppose that all functions of $x$ are integrable and differentiable on $[\alpha;\beta]$.
\begin{lemma}
\label{lemmaM-int}
For $x\in [\alpha;\beta]$, let $f(x)$ be non-negative and non-increasing, $g(x)$ be non-decreasing and such that ${0\le g(x) \le B}$, and $P(x)\ge 0$. Then
$$
\int_\alpha^\beta f(x)g(x)\,dP(x)\le B \int_\alpha^\beta f(x)\,dP(x).
$$
\end{lemma}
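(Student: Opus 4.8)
The plan is to mimic the discrete argument of Lemma~\ref{lemmaM} with summation by parts replaced by integration by parts. Concretely, I would first form the difference
$$
B\int_\alpha^\beta f(x)\,dP(x)-\int_\alpha^\beta f(x)g(x)\,dP(x)=\int_\alpha^\beta f(x)\bigl(B-g(x)\bigr)\,dP(x),
$$
and set $h(x):=f(x)\bigl(B-g(x)\bigr)$. Since $f$ is non-negative non-increasing and $B-g$ is non-negative (by $g(x)\le B$) and non-increasing (by $g$ non-decreasing), the product $h$ is non-negative and non-increasing on $[\alpha;\beta]$; this is the integral analogue of the elementary monotonicity fact used in the discrete proof. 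So it suffices to show $\int_\alpha^\beta h(x)\,dP(x)\ge 0$ whenever $h\ge0$ is non-increasing and $P\ge0$.

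For that, integrate by parts using $P(\alpha)=0$ (immediate from the definition~(\ref{P(x)})):
$$
\int_\alpha^\beta h(x)\,dP(x)=h(\beta)P(\beta)-\int_\alpha^\beta P(x)\,dh(x).
$$
The first term is $\ge0$ since $h(\beta)\ge0$ and $P(\beta)\ge0$. For the second term, $h$ non-increasing means $dh(x)=h'(x)\,dx\le 0$ (under the standing differentiability assumption stated before~(\ref{P(x)})), while $P(x)\ge0$; hence $-\int_\alpha^\beta P(x)\,dh(x)=-\int_\alpha^\beta P(x)h'(x)\,dx\ge0$. Adding the two non-negative contributions gives $\int_\alpha^\beta h(x)\,dP(x)\ge0$, which is exactly what was needed, and equality holds for instance when $g\equiv B$ (then $h\equiv 0$).

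The only point requiring a little care — the ``main obstacle'' such as it is — is the justification of integration by parts and of the sign of $dh$ in the stated generality: the paper assumes the relevant functions are integrable and differentiable on $[\alpha;\beta]$, so $P$ is absolutely continuous with $P'=p$ and $h$ is differentiable, and the formula $\int_\alpha^\beta h\,dP'=\int_\alpha^\beta hp\,dx$ together with the classical integration-by-parts identity applies directly; monotonicity of $h$ then forces $h'\le0$ a.e.\ so the boundary-plus-integral decomposition above is valid term by term. No finer measure-theoretic machinery is needed.
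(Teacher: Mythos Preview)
Your proof is correct and follows essentially the same route as the paper: form the difference $\int_\alpha^\beta f(x)(B-g(x))\,dP(x)$, integrate by parts using $P(\alpha)=0$, and check signs via $P\ge0$, $f\ge0$, $B-g\ge0$, and the monotonicity of $f$ and $g$. The only cosmetic difference is that the paper expands $d\bigl(f(x)(B-g(x))\bigr)$ by the product rule into $f'(x)(B-g(x))-f(x)g'(x)$ before reading off the signs, whereas you bundle this into the single observation that $h=f(B-g)$ is non-increasing.
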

\begin{proof} Applying integration by parts gives
\begin{equation*}
\begin{split}
B &\int_\alpha^\beta f(x)dP(x)-\int_\alpha^\beta f(x)g(x)dP(x)\\
&=\left.P(x)f(x)(B-g(x))\right|_\alpha^\beta-\int_\alpha^\beta P(x)d(f(x)(B-g(x)))\\
&=P(\beta)f(\beta)(B-g(\beta))+\int_\alpha^\beta P(x)\left(f(x)g'(x)-f'(x)(B-g(x))\right)dx \ge 0.
\end{split}
\end{equation*}
Here we took into account that $P(\alpha)=0$; $P(x)$, $f(x)$, $g'(x)$, $B-g(x)$ are non-negative and $f'(x)$ is non-positive for $x\in[\alpha;\beta]$.
It is easily seen that equality holds for example if $g(x)\equiv B$.
\end{proof}
Using Lemma~\ref{lemmaM-int} and intergation by parts instead of the Abel transformation, we obtain the following results by essential repeating proofs of Theorems~\ref{Holder1}~and~\ref{Mink}. We emphasize that $dP(x)$ may be negative here in contrast to the classical case.
\begin{theorem}
\label{Holder-int}
For $x\in[\alpha;\beta]$, let $f(x)$ and $g(x)$ be non-increasing and
$$
0<a\le f(x)\le A <\infty, \qquad 0<b\le g(x)\le B <\infty.
$$
If, moreover, $P(x)\ge 0$, $x\in[\alpha;\beta]$, and $p,q>1$, $1/p+1/q=1$, then
\begin{equation}
0\le \frac{\left(\int_\alpha^\beta f^q(x) dP(x)\right)^{1/q}\left(\int_\alpha^\beta g^p(x)dP(x)\right)^{1/p}}{\int_\alpha^\beta f(x)g(x)dP(x)}
\le \left(pA/a\right)^{1/p}\left(qB/b\right)^{1/q}.
\label{BC-int}
\end{equation}
The left hand side of~$(\ref{BC-int})$ should be read as there exists no positive constant, depending on $a,A,b,B,p$ and $q$, which bounds the fraction in $(\ref{BC-int})$ from below.
\end{theorem}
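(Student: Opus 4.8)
The plan is to reproduce the proof of Theorem~\ref{Holder1} essentially line by line, replacing the Abel transformation by integration by parts and Lemma~\ref{lemmaM} by Lemma~\ref{lemmaM-int}, exactly as indicated in the passage preceding the statement. Write $F_{\text{H}}$ for the fraction in~(\ref{BC-int}) and $N_{\text{H}}$ for its numerator. Integrating by parts once in each of the three integrals and using $P(\alpha)=0$, $P\ge 0$, $f'\le 0$, $g'\le 0$ shows that $\int_\alpha^\beta fg\,dP$, $\int_\alpha^\beta f^q\,dP$ and $\int_\alpha^\beta g^p\,dP$ are non-negative, so $F_{\text{H}}\ge 0$. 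To see that no positive constant depending only on $a,A,b,B,p,q$ bounds $F_{\text{H}}$ from below, I would imitate the construction in the proof of Theorem~\ref{Holder1} and exhibit admissible data with $F_{\text{H}}=0$: take $f\equiv c$ constant with $a\le c\le A$, take $g$ strictly decreasing with $b\le g\le B$, and take $p$ so that $P(\alpha)=P(\beta)=0$ and $P>0$ on $(\alpha;\beta)$ (for instance $P(x)=(x-\alpha)(\beta-x)$). Then $\int_\alpha^\beta f^q\,dP=c^qP(\beta)=0$, hence $N_{\text{H}}=0$, while integration by parts gives $\int_\alpha^\beta fg\,dP=-c\int_\alpha^\beta P(x)g'(x)\,dx>0$, so $F_{\text{H}}=0$.

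For the right-hand inequality I would first prove the integral counterpart of the bound $\sum_k p_k a_k^q\le qA^{q-1}\sum_k p_k a_k$ used in the discrete proof, namely
\[
\int_\alpha^\beta f^q(x)\,dP(x)\le qA^{q-1}\int_\alpha^\beta f(x)\,dP(x),
\]
via integration by parts: since $P(\alpha)=0$,
\[
\int_\alpha^\beta f^q\,dP=P(\beta)f^q(\beta)+q\int_\alpha^\beta P(x)f^{q-1}(x)\bigl(-f'(x)\bigr)\,dx,
\]
where $f^{q-1}\le A^{q-1}$ under the integral while $P(\beta)f^q(\beta)\le qA^{q-1}P(\beta)f(\beta)$; re-assembling (again using $P(\alpha)=0$) gives the claimed bound, and similarly $\int_\alpha^\beta g^p\,dP\le pB^{p-1}\int_\alpha^\beta g\,dP$. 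Introducing arbitrary constants $C,D>0$, applying Young's inequality~(\ref{Young}) to $\bigl(C\int f\,dP\bigr)^{1/q}\bigl(D\int g\,dP\bigr)^{1/p}$, and using $(q-1)/q=1/p$, $(p-1)/p=1/q$, one arrives at
\[
N_{\text{H}}\le \frac{(pA)^{1/p}(qB)^{1/q}}{C^{1/q}D^{1/p}}\int_\alpha^\beta\biggl(\frac{C}{q\,g(x)}+\frac{D}{p\,f(x)}\biggr)f(x)g(x)\,dP(x).
\]

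In the last integrand the factor $x\mapsto C/(q\,g(x))+D/(p\,f(x))$ is non-decreasing (because $f,g$ are positive and non-increasing) and bounded above by $C/(qb)+D/(pa)$, while $x\mapsto f(x)g(x)$ is non-negative and non-increasing; hence Lemma~\ref{lemmaM-int} applies and yields
\[
N_{\text{H}}\le (pA)^{1/p}(qB)^{1/q}\biggl(\frac{1}{qb}\Bigl(\frac{C}{D}\Bigr)^{1/p}+\frac{1}{pa}\Bigl(\frac{D}{C}\Bigr)^{1/q}\biggr)\int_\alpha^\beta f(x)g(x)\,dP(x).
\]
A short computation shows the bracket is minimised at $C/D=b/a$, where it equals $\bigl(a^{1/p}b^{1/q}\bigr)^{-1}$, and this produces the constant $(pA/a)^{1/p}(qB/b)^{1/q}$ in~(\ref{BC-int}), which also lies in $(1;\infty)$.

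I do not anticipate a genuinely new obstacle relative to Theorem~\ref{Holder1}; the points that need care are of bookkeeping type. First, one must check that the boundary terms produced by the repeated integration by parts do not harm the estimate: one of them vanishes because $P(\alpha)=0$, and the other is dominated by the main term. Second, the exponent identities $(q-1)/q=1/p$ and $(p-1)/p=1/q$ must be carried through exactly as in the discrete argument. Third, the monotonicity and upper-boundedness hypotheses of Lemma~\ref{lemmaM-int} must be verified, which is immediate from the stated assumptions on $f$, $g$ and $P$. Throughout, the only property of $P$ that enters is $P\ge 0$ (with $P(\alpha)=0$ built into the definition~(\ref{P(x)})), so $dP$ is allowed to change sign, in contrast with the classical situation, and integration by parts together with $P(\alpha)=0$ plays the role that Abel summation plays in the proof of Theorem~\ref{Holder1}.
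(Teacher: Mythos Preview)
Your proposal is correct and follows exactly the approach the paper indicates: it transcribes the proof of Theorem~\ref{Holder1} to the integral setting, replacing Abel summation by integration by parts and Lemma~\ref{lemmaM} by Lemma~\ref{lemmaM-int}, including the same use of~(\ref{ineq+-}), Young's inequality with auxiliary constants $C,D$, and the final optimisation $C/D=b/a$. The only cosmetic point is that in your counterexample you write ``take $p$ so that $P(\alpha)=P(\beta)=0$'' where $p$ now denotes the weight density rather than the H\"older exponent; otherwise the bookkeeping (boundary terms via $P(\alpha)=0$, the bound $f^{q-1}(\beta)\le A^{q-1}\le qA^{q-1}$, and the monotonicity check for Lemma~\ref{lemmaM-int}) is handled correctly.
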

\begin{theorem}
\label{Mink-int}
For $x\in[\alpha;\beta]$, let $f(x)$ and $g(x)$ be non-negative and non-increasing, and $P(x)\ge 0$. Then
\begin{equation}
\label{Minn-int}
0\le \frac{\left(\int_\alpha^\beta f^p(x)\,dP(x)\right)^{1/p}+\left(\int_\alpha^\beta g^p(x)\,dP(x)\right)^{1/p}}{\left(\int_\alpha^\beta (f(x)+g(x))^p\,dP(x)\right)^{1/p}} \le 2^{1-1/p}, \quad p\ge 1.
\end{equation}
The constant $2^{1-1/p}$ is best possible. The left hand side of $(\ref{Minn-int})$ should be read as there exists no positive constant, depending only on $p$, which  bounds the fraction in~$(\ref{Minn-int})$ from below.
\label{Mink0-int}
\end{theorem}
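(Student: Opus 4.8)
The plan is to transcribe the proof of Theorem~\ref{Mink0} into the integral setting, using integration by parts in place of the Abel transformation. Write $F_{\text{M}}$ for the fraction in (\ref{Minn-int}). First I would record that both numerator and denominator of $F_{\text{M}}$ are non-negative: integrating by parts,
$$
\int_\alpha^\beta f^p(x)\,dP(x)=P(\beta)f^p(\beta)-\int_\alpha^\beta P(x)\,p\,f^{p-1}(x)f'(x)\,dx\ge 0,
$$
because $P(\alpha)=0$, $P\ge 0$, $f\ge 0$ and $f'\le 0$; the same computation applies to the $g$-integral and to $\int_\alpha^\beta(f(x)+g(x))^p\,dP(x)$, so $F_{\text{M}}\ge 0$.

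Next, for the non-existence of a positive lower bound I would imitate the discrete example used in Theorem~\ref{Mink0}: choose $p(x)$ changing sign on consecutive unit subintervals so that $P(x)$ oscillates between $0$ and $1$ (hence stays $\ge 0$), and take $f$ and $g$ to be the normalized analogues of $\mathbf{a}=\{1,1,0,\ldots\}$ and $\mathbf{b}=\{b,0,\ldots\}$. A direct evaluation then makes the numerator essentially $b$ and the denominator essentially $((1+b)^p-1)^{1/p}$, so the left-hand estimate of (\ref{ineq+-}) gives $F_{\text{M}}\le b/(pb)^{1/p}=b^{1-1/p}/p^{1/p}\to 0$ as $b\to 0$, since $1-1/p>0$ for $p>1$. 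The only point requiring care is that the standing hypotheses ask $f,g,p$ to be differentiable; this is met by a routine smoothing of these step functions that perturbs the quotient by an arbitrarily small amount.

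For the upper bound I would apply Jensen's inequality (\ref{Jensen}) to obtain
$$
\left(\Big(\int_\alpha^\beta f^p\,dP\Big)^{1/p}+\Big(\int_\alpha^\beta g^p\,dP\Big)^{1/p}\right)^{p}\le 2^{p-1}\int_\alpha^\beta\big(f^p(x)+g^p(x)\big)\,dP(x),
$$
and then reduce, as in the discrete case, to the inequality $\int_\alpha^\beta(f^p+g^p)\,dP\le\int_\alpha^\beta(f+g)^p\,dP$, i.e.\ $\int_\alpha^\beta c(x)\,dP(x)\ge 0$ with $c(x):=(f(x)+g(x))^p-f^p(x)-g^p(x)$. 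Integration by parts gives
$$
\int_\alpha^\beta c(x)\,dP(x)=P(\beta)c(\beta)-\int_\alpha^\beta P(x)\,c'(x)\,dx\ge 0,
$$
since $P(\alpha)=0$, $P(\beta)\ge 0$, $c(\beta)\ge 0$ by superadditivity of $t\mapsto t^p$, $P\ge 0$, and $c'\le 0$: writing $c(x)=\Phi(f(x),g(x))$ with $\Phi(u,v)=(u+v)^p-u^p-v^p$, one has $\Phi_u,\Phi_v\ge 0$ on $[0,\infty)^2$ while $f',g'\le 0$, so $c'=\Phi_u f'+\Phi_v g'\le 0$. Taking $p$-th roots yields $F_{\text{M}}\le 2^{1-1/p}$.

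Finally, for sharpness I would take a fixed interval with $p(x)\equiv 1$ (so $P(x)=x$), set $f\equiv 1$, and let $g$ be a (smoothed) tall narrow spike normalized so that $\int g^p\,dP=\int f^p\,dP$; letting the spike grow taller and thinner drives the numerator to twice a common value and the denominator to $2^{1/p}$ times it, so the quotient tends to $2/2^{1/p}=2^{1-1/p}$ — exactly mirroring the role of $\mathbf{a}=\{1,\ldots,1,0,\ldots\}$, $\mathbf{b}=\{n^{1/p},0,\ldots\}$ in Theorem~\ref{Mink0}. I expect the only real obstacle to be bookkeeping: arranging the two extremal constructions to respect the differentiability assumption while preserving the relevant limits; the analytic core (integration by parts plus the monotonicity of $c$) is a faithful copy of the discrete argument.
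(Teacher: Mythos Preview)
Your proposal is correct and follows exactly the route the paper indicates: the paper does not give a separate proof of Theorem~\ref{Mink-int} but simply states that it is obtained ``by essential repeating'' the proof of Theorem~\ref{Mink} with integration by parts in place of the Abel transformation, which is precisely what you carry out. One small terminological point: in your sharpness construction the ``tall narrow spike'' $g$ must be non-increasing (so concentrated at the left endpoint $\alpha$, not a bump), in keeping with the discrete choice $\mathbf{b}=\{n^{1/p},0,\ldots,0\}$.
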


In conclusion we give several examples concerning Theorems 3 and 4. Let $p(t)=\sin t$ and $x\in [0;\infty)$ in (\ref{P(x)}), then $P(x)=1-\cos x\ge 0$, and thus
$$
0\le \frac{\left(\int_{0}^{\infty} f^q(x)\sin x \;dx\right)^{1/q}\left(\int_{0}^{\infty} g^p(x)\sin x \;dx\right)^{1/p}}{\int_{0}^{\infty} f(x)g(x)\sin x \;dx}
\le \left(pA/a\right)^{1/p}\left(qB/b\right)^{1/q},
$$
$$
0\le \frac{\left(\int_{0}^{\infty} f^p(x)\sin x \;dx\right)^{1/p}+\left(\int_{0}^{\infty} g^p(x)\sin x \;dx\right)^{1/p}}{\left(\int_{0}^{\infty} (f(x)+g(x))^p\sin x \;dx\right)^{1/p}} \le 2^{1-1/p}, \qquad p\ge 1.
$$
Appropriate discretization yields inequalities with alternating signs obtained earlier in~\cite{Chunaev} (the case $p_k=(-1)^{k+1}$ in Theorems~1 and~2).

If $P(x)$ is non-decreasing for $x\in[\alpha;\beta]$ (i.e. $dP(x)$ is non-negative), Theorems~3 and~4 give the classical case of non-negative weights, for which we can put $1$ instead of $0$ in the left hand sides of (\ref{BC-int}) and (\ref{Minn-int}) due to H\"{o}lder's and Minkowski's inequalities.

\bigskip

\bigskip

\begin{flushright}
{\small

\textbf{Petr Chunaev}\\
Centre de Recerca Matem\`{a}tica, Spain\\
E-mail: \textsf{chunayev@mail.ru}

\bigskip

\textbf{Ljiljanka Kvesi\'{c}}\\
University of Mostar, Faculty of Science and Education, Bosnia and Herzegovina\\
E-mail: \textsf{ljkvesic@gmail.com}

\bigskip

\textbf{Josip Pe\v{c}ari\'{c}}\\
University of Zagreb, Faculty of Textile Technology, Croatia\\
E-mail: \textsf{pecaric@mahazu.hazu.hr}
}
\end{flushright}

\end{document}